\documentclass[11pt]{amsart}
\usepackage{amsmath,amssymb,amsthm,booktabs}
\usepackage[margin=1.1in]{geometry}
\usepackage[hidelinks]{hyperref}

\newtheorem{theorem}{Theorem}
\newtheorem{proposition}[theorem]{Proposition}
\newtheorem{lemma}[theorem]{Lemma}

\theoremstyle{remark}
\newtheorem{remark}[theorem]{Remark}

\newcommand{\lam}{\lambda}
\newcommand{\cP}{\mathcal P}
\newcommand{\cE}{\mathcal E}
\newcommand{\cI}{\mathcal I}
\newcommand{\cR}{\mathcal R}
\newcommand{\Z}{\mathbb Z}
\newcommand{\E}{\mathbb E}

\title{Few small primes suffice to cover unit groups}
\author{Scott D. Hughes}
\thanks{Generative AI tools assisted with the development, preparation, and review of this work.}
\date{}

\begin{document}

\begin{abstract}
\setlength{\emergencystretch}{1em}
Klurman, Shparlinski and Teräväinen showed that there is a set of at most $(\log Q)^{1+\varepsilon}$ primes of
polylogarithmic size whose subset products cover the unit group $(\Z/q\Z)^\times$ for almost all moduli $q\le Q$. We show
that $(1/\log2+o(1))\log Q$ primes suffice, which is optimal to first order, since $m$ primes have at most $2^m$ subset
products. The proof combines a doubling argument of Erd\H{o}s--R\'enyi type for random subset products in a finite abelian
group, modified to tolerate a small set of exceptional characters, with a one-time ``repair'' of the few primitive
characters whose $L$-functions have zeros near $s=1$. An exact weighted repair gives a variant with at most
$(2/\log2+o(1))\log Q$ primes and an exceptional set described explicitly.
\end{abstract}

\maketitle

\section{Introduction}

For a finite set $\cP$ of primes and an integer $q\ge2$, say that $\cP$ \emph{covers} $q$ if
\[
\Bigl\{\prod_{p\in F}p \bmod q : F\subseteq\cP\Bigr\}=(\Z/q\Z)^\times ,
\]
equivalently, every prime in $\cP$ is coprime to $q$ and every unit modulo $q$ is a product of \emph{distinct} elements
of $\cP$ (each prime used at most once). Since
there are only $2^{|\cP|}$ subset products, covering $q$ needs $|\cP|\ge\log_2\varphi(q)$.

Klurman, Shparlinski and Teräväinen \cite[Theorem 1.1]{KST} proved: for every $A>0$ and $\varepsilon>0$ there is a set
$\cP$ of at most $(\log Q)^{1+\varepsilon}$ primes, all at most $(\log Q)^{C(A+1)}$, that covers every $q\le Q$ outside
an exceptional set of size $O_{\varepsilon,A}(Q(\log Q)^{-A})$. Their proof selects random primes and uses a Chernoff
bound together with a union bound over all pairs $(q,\chi)$. Their separation estimate for characters of small conductor
loses a factor $(\log Q)^{-\varepsilon/2}$, and the random selection must give sufficient separation simultaneously for
all these pairs. Here we repair the few exceptional primitive characters before using a two-stage covering argument.
Throughout, $\lam=\log2$, $\ell=\log Q$, and all logarithms are natural.

\begin{theorem}\label{thm:main}
For every $A>0$ there is $C_A=O(A+1)$ such that, for all sufficiently large $Q$, there is a set $\cP_Q$ of distinct primes
with
\[
(\log Q)^{A+3}<p\le(\log Q)^{C_A}\quad(p\in\cP_Q),\qquad
|\cP_Q|\le\frac{\log Q}{\log2}+O_A\Bigl(\frac{\log Q}{\log\log Q}\Bigr),
\]
that covers every integer $2\le q\le Q$ outside a set of $O_A\bigl(Q(\log Q)^{-A}\bigr)$ moduli.
\end{theorem}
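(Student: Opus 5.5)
We combine three ingredients: a zero-density input for Dirichlet $L$-functions, a repair of the few bad characters, and an Erd\H{o}s--R\'enyi doubling argument in $G_q=(\Z/q\Z)^\times$. Put $y=(\log Q)^{C_A}$, $x_0=(\log Q)^{A+3}$, and let $\Pi$ be the primes in $(x_0,y]$.

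\textbf{Step 1: zero-density input.} A log-free zero-density estimate (Huxley/Jutila type) shows the following. Apart from $O((\log Q)^{B})$ primitive characters $\chi^*$ of conductor at most $Q$, with $B=O(A+1)$, no $L(s,\chi^*)$ has a zero in the region $\sigma>1-c\log\log Q/\log Q$, $|t|\le y$. For every other character, the explicit formula gives prime sums with
\[
\Bigl|\sum_{p\in\Pi}\chi(p)\log p\Bigr|\le \frac{y}{(\log Q)^{D}},
\]
uniformly in the modulus $q\le Q$ that induces $\chi$, once $C_A$ is large in terms of $D$.

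\textbf{Step 2: discarding moduli.} Each exceptional primitive character of conductor $d$ lives on moduli divisible by $d$. We discard those $q$ divisible by an exceptional conductor $d\le(\log Q)^{A+B}$, and also those $q$ with awkward small structure, such as a large smooth part or many small prime factors. The exceptional conductors with $d>(\log Q)^{A+B}$ contribute at most $\sum Q/d$, which is $O(Q(\log Q)^{-A})$. For a small exceptional conductor we cannot afford to discard its multiples. Instead we perform a one-time repair: we choose $O(\log Q/\log\log Q)$ auxiliary primes $\cR\subset\Pi$ whose subset products equidistribute in the small quotients $(\Z/d\Z)^\times$ attached to the exceptional $\chi^*$. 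Since these quotients have size at most $(\log Q)^{O(A+1)}$, a greedy choice of $O(\log\varphi(d))$ primes that generate a covering of each quotient suffices, and there are only polylogarithmically many such groups. This keeps the repair set of size $o(\log Q)$.

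\textbf{Step 3: random covering with tolerated exceptions.} Draw a random set $\cP'$ of $m=\log Q/\lam+K\log Q/\log\log Q$ primes from $\Pi$, weighted by $\log p$. Fix a surviving $q$ and track the subset-product sets $S_j\subseteq G_q$. The first stage uses the Erd\H{o}s--R\'enyi doubling step: as long as $|S_j|\le|G_q|/(\log Q)^{2}$, adding a random prime $p$ doubles $S_j$ up to a factor $1-O(|S_j|/|G_q|+\text{error})$. In expectation, $|S_j\cap pS_j|$ equals $|S_j|^2/|G_q|$ plus a sum over $\chi\ne1$ of $|\hat S_j(\chi)|^2\E\chi(p)$. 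Non-exceptional characters contribute at most $|S_j|(\log Q)^{-D}$ by Step 1. Characters induced from exceptional primitive characters factor through a small quotient, and their Fourier mass is controlled because the repair set $\cR$ has already made the distribution on that quotient flat. This costs $\log_2\varphi(q)+O(\log Q/\log\log Q)$ primes. The second stage starts once $|S|\ge|G_q|/(\log Q)^2$: a further $O(\log\log Q)$ random primes fill the rest, by a second-moment or Cauchy--Schwarz argument showing that each missing element $g$ is hit with probability $1-o(Q^{-2})$, so a union bound over $g$ applies.

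\textbf{Step 4: derandomizing.} Summing failure probabilities over surviving $q\le Q$ gives expected total failure below $1$, so some choice of $\cP'$ works for all of them. Take $\cP_Q=\cP'\cup\cR$.

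\textbf{Main obstacle.} The hardest part is Step 3 in the presence of the exceptional characters. The doubling estimate must tolerate them without losing more than $o(\log Q)$ primes, and the repair in Step 2 must be compatible with every modulus $q$ divisible by the exceptional conductor at once. My first attempt would be to make $S_j$ a union of full cosets of the kernel of the small quotient map. That way, characters factoring through the quotient contribute exactly their main term.
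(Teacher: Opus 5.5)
\textbf{Verdict: genuine gaps.} Your architecture matches the paper's: discard multiples of large bad conductors, repair the small ones once, run an Erd\H{o}s--R\'enyi doubling, and average over $q$. However, several steps fail as written.

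\emph{Step~1 is false.} With a zero-free region of width $\eta\asymp\log\log Q/\log Q$ and primes up to $y=(\log Q)^{C_A}$, the explicit formula saves only
$y^{-\eta}=\exp\bigl(-O((\log\log Q)^2/\log Q)\bigr)=1-o(1)$,
while it loses a factor $(\log Q)^{O(1)}$. So it gives nothing, however large $C_A$ is. A saving of $(\log Q)^{-D}$ needs a region of constant width $\sigma\ge(D+3)/C_A$. Zero density then bounds the bad characters of conductor up to $Q$ only by a power of $Q$, so Step~2 must be restructured:
\begin{itemize}
\item Only bad characters of conductor at most a polylogarithmic $Y=(\log Q)^K$ can be repaired.
\item There must be $O(\log Q/(\log\log Q)^2)$ of them for the repair to fit your budget. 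The paper takes $\sigma$ small compared with $1/K$ to get at most $(\log Q)^{1/4}$. Your count $(\log Q)^{B}$ with $B$ growing like $A$ is too large.
\item Multiples of bad conductors in $(Y,Q]$ are removed by a dyadic zero-density count, $\sum_R R^{6\sigma}P^{3\sigma^2}\cdot Q/R\ll QY^{-1+6\sigma}P^{3\sigma^2}$ with $P=(\log Q)^{C_A}$. Summing $Q/d$ over polylogarithmically many $d$ is not available.
\end{itemize}

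\emph{The main obstacle you flag is the heart of the proof, and you do not resolve it.} Subset products covering $(\Z/d\Z)^\times$ is not flatness. What is needed is a quantitative bound $|\widehat\mu_0(\chi)|\le\epsilon$ for the subset-product measure at every bad $\chi$. Even with such a bound you would have to show it propagates. The set transform $\hat S(\chi)=\sum_{x\in S}\chi(x)$ is multiplicative under $S\mapsto S\cup pS$ only up to the overlap $S\cap pS$, and you give no argument for this.

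The paper instead tracks the convolution $\mu_j=\mu_0*\beta_{p_1}*\cdots*\beta_{p_j}$, for which $\widehat\mu_j(\chi)=\widehat\mu_0(\chi)\prod_{i\le j}(1+\chi(p_i))/2$. So a repair with $|\widehat\mu_0(\chi)|\le\epsilon$ persists automatically. The repair uses $t\asymp\log\log Q$ primes in a class where $\operatorname{Re}\psi\le-1/2$; these exist in range by a weak Linnik bound. The estimate $\E[H\|\mu_r\|_2^2-1]\le(H-1)\rho^r+B\epsilon^2$, with Cauchy--Schwarz on the support, pushes the missing fraction below some $\alpha$ with $(B+1)\alpha$ small. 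Only then can the hole-contraction step bound bad characters trivially by $|F_T(\chi)|\le h$. At density $(\log Q)^{-2}$, where your second stage starts, that trivial bound is useless.

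\emph{Your second stage is too short.} Primes of polylogarithmic size give character bounds $|\E\chi(p)|\le\delta$ of only polylogarithmic strength. The second-moment hole estimate therefore shrinks the missing set by a factor of only about $\delta$ per prime. This stage needs $\asymp\log Q/\log\log Q$ primes, not $O(\log\log Q)$; that is still affordable.

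\emph{Step~4 needs failure probability $o(1/Q)$ per modulus, and nothing in Step~3 provides it.} In the $L^2$ argument, $(H-1)\rho^r$ is $Q^{-o(1)}$ unless $r\gtrsim2\log_2Q$, and $B\epsilon^2$ would need $\epsilon<Q^{-1/2}$. This is why the paper's variant without random exceptions costs $2\log Q/\log2$ primes and an exact weighted repair. To keep the constant $1/\log2$, apply Markov to the number of uncovered moduli instead. Put those $O(Q(\log Q)^{-A-10})$ moduli into the exceptional set.
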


The proof bounds three exceptional sets separately (Section~\ref{sec:proof}): moduli divisible by the conductor of
a ``bad'' primitive character with conductor greater than $Y$, moduli divisible by a selected prime, and
$O_A(Q\ell^{-A-10})$ additional moduli left uncovered by the selected outcome. Theorem~\ref{thm:structured} removes
the last set at the cost of a larger leading constant. The description of the first set uses zeros of Dirichlet
$L$-functions and does not assert an efficient membership algorithm.

\begin{theorem}\label{thm:structured}
For every $A>0$ there is $C_A=O(A+1)$ such that, for all sufficiently large $Q$, there is a set $\cP'_Q$ of distinct
primes in $((\log Q)^{A+3},(\log Q)^{C_A}]$ satisfying
\[
|\cP'_Q|\le\frac{2\log Q}{\log2}+O_A\Bigl(\frac{\log Q}{\log\log Q}\Bigr),
\]
that covers every $2\le q\le Q$ with
$q\notin\cE_{\rm an}$ and $\gcd(q,\prod_{p\in\cP'_Q}p)=1$, where $\cE_{\rm an}$ is the set defined in
\eqref{eq:Ean} below, of size $O_A(Q\ell^{-A-9})$.
\end{theorem}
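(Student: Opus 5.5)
We prove Theorem~\ref{thm:structured} by running the argument for Theorem~\ref{thm:main} twice in parallel. The second copy replaces the random-outcome exceptional set with a deterministic, weighted repair.

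\textbf{Step 1: the exceptional set.} Let $Y=\ell^{B}$ with $B=B(A)$ large. Define $\cE_{\rm an}$ as in \eqref{eq:Ean}: the set of $q\le Q$ divisible by the conductor $d>Y$ of some primitive character $\chi$ whose $L(s,\chi)$ has a zero in a region of the form $\sigma>1-c\log\log Q/\log Q$, $|t|\le \ell^{C}$.

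Zero-density estimates (log-free, of Huxley/Jutila type) bound the number of such conductors $d\le Q$ by $\ell^{O(1)}$ in any dyadic range. Hence
\[
|\cE_{\rm an}|\le Q\sum_{d>Y\ \mathrm{bad}}1/d\ll Q\ell^{-A-9}
\]
once $B$ is chosen large enough. All remaining characters, namely primitive characters with good conductor, plus the $O(\ell^{O(1)})$ bad characters of conductor $\le Y$, are handled by sieving primes.

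\textbf{Step 2: deterministic repair.} For each bad primitive character of conductor $\le Y$, we choose $O(\log Q/\log\log Q)$ primes in $(\ell^{A+3},\ell^{C_A}]$ on which $\chi$ takes separated values. Together these generate, for every $q$ divisible by its conductor, enough of the corresponding cyclic quotient. This part of the construction is explicit.

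\textbf{Step 3: exact weighted covering.} We replace the probabilistic doubling with a counting version. Take $m=(2/\lam+o(1))\ell$ primes, chosen to satisfy a single aggregate inequality. For every $q\notin\cE_{\rm an}$ coprime to the chosen primes, and every nontrivial non-repaired character $\chi$ mod $q$, we require
\[
\prod_{p}\Bigl|\tfrac{1+\chi(p)}{2}\Bigr|\le \varphi(q)^{-1-\delta}.
\]
Writing $\#\{F:\prod_{p\in F}p\equiv a\}$ by orthogonality as
\[
\frac{2^m}{\varphi(q)}\Bigl(1+\sum_{\chi\ne\chi_0}\overline{\chi}(a)\prod_p\tfrac{1+\chi(p)}{2}\Bigr),
\]
this forces every residue $a$ to be hit, with no failure probability left.

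To produce such primes, average $\sum_{q,\chi}\prod_p|(1+\chi(p))/2|^2$ over a random choice from the prime interval, with weights $w_q=\varphi(q)^{1+\delta}$. For good characters, prime-sum estimates give
\[
\E|(1+\chi(p))/2|^2=\tfrac12+O(\ell^{-1}).
\]
The total weighted expectation is therefore about $\sum_q\varphi(q)^{2+\delta}2^{-m}$, and this is $<1$ when $m\ge(2+2\delta)\ell/\lam$. Some outcome therefore satisfies every pair $(q,\chi)$ simultaneously. This is the source of the factor $2$: the argument uses a second-moment bound in place of the Erd\H{o}s--R\'enyi doubling. The repaired characters from Step~2 are put into the same accounting by pairing $\chi$ with the separated values obtained there.

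\textbf{Main obstacle.} The hardest step is the uniform estimate $\E|1+\chi(p)|^2/4\le\tfrac12+o(1/\log)$ for all good $\chi$ of every modulus $q\le Q$. Primes of polylogarithmic size only give short character sums, so this needs the zero-free region from Step~1 together with an explicit formula. The loss must also be kept at $O(\ell/\log\ell)$ in the final count. I would first try to bound $\sum_p\chi(p)\log p$ over $(\ell^{A+3},\ell^{C_A}]$ via the explicit formula, using the zero-free region to get saving $\ell^{-c}$. The remaining contributions are then absorbed into the $O_A(\ell/\log\ell)$ error term.
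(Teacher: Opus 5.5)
Your Step 3 does not give the constant $2/\log 2$, because its final computation drops a factor $Q$. With weights $w_q=\varphi(q)^{1+\delta}$, an outcome whose weighted sum is below $1$ gives $\sum_{\chi\ne\chi_0}\prod_p|\frac{1+\chi(p)}{2}|^2<\varphi(q)^{-1-\delta}$ for every $q$. This is not the pointwise bound you state, which would need weights $\varphi(q)^{2+2\delta}$ and an expectation $\asymp Q^{4+2\delta}2^{-m}$. Cauchy--Schwarz then does give $\sum_{\chi\ne\chi_0}|\prod_p\frac{1+\chi(p)}{2}|<1$. But the expectation you must push below $1$ is $\sum_{q\le Q}\varphi(q)^{2+\delta}2^{-m}\asymp Q^{3+\delta}2^{-m}$, so you need $m\ge(3+\delta)\ell/\lam$. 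As set up, your route proves the theorem with $3/\log 2$, not $2/\log 2$.

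The loss is built in. Bounding the $L^1$ Fourier criterion by second moments requires $2^m\gtrsim\varphi(q)^2$ already for one modulus, and the union over $Q$ moduli costs another $\log_2 Q$. So the paper's factor $2$ is not the price of replacing doubling by a second moment; it is obtained \emph{with} doubling. The collision step of Lemma~\ref{lem:cover} only needs $\sum_{\chi\ne1}|\widehat\mu_r(\chi)|^2$ to fall below about $\alpha$, not below $1/\varphi(q)$; that already bounds the fraction of missing residues by $\alpha$. The holes are then closed by $s_*=O(\ell/\log\ell)$ further primes. The extra $\log_2 Q$ is exactly the price of a conditional failure probability $o(1/Q)$ per modulus, which makes $\E X_*=o(1)$. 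Hence $r_*\approx\log_2\varphi(q)+\log_2 Q\le 2\ell/\lam$.

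That $o(1/Q)$ also needs the second missing ingredient, the exact weighted repair. In \eqref{eq:cover} the bad characters enter through $\frac{1-\alpha}{\alpha}B\epsilon^2$, which must also be $o(1/Q)$. An unweighted repair as in Lemma~\ref{lem:repair} gives only $\epsilon=2^{-t}$, so it needs $t\approx\frac12\log_2 Q$ primes per bad character, far beyond the $O(\ell/\log\ell)$ allowance. The paper instead takes as initial law the law of $W_{\rm rep}=\prod_\psi W_\psi$ from Lemma~\ref{lem:exact-repair}. For this law $\widehat\mu^*_{0,q}(\chi)=0$ for every $\chi$ induced by a bad character of conductor at most $Y$, so Lemma~\ref{lem:cover} applies with $\epsilon=0$ at a cost of only $2B=O(\ell^{1/4})$ primes.

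Your Step 2 has two further problems. It yields no Fourier bound that could enter your orthogonality formula (``separated values'', ``enough of the cyclic quotient'' are not such bounds). And $O(\ell/\log\ell)$ primes for each of up to $\ell^{1/4}$ bad characters already exceeds the error term.

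Finally, in Step 1 a zero region of width $c\log\log Q/\log Q$ gives no saving for prime sums up to $\ell^{C_A}$. The set $\cE_{\rm an}$ of \eqref{eq:Ean} uses the fixed-width rectangle $\operatorname{Re}s\ge1-\sigma$, $|\operatorname{Im}s|\le P^\sigma$ with $\sigma=W/C$, and that is what feeds \eqref{eq:ZF} to give $\delta=\ell^{-8}$. With this region, a dyadic range $(R,2R]$ contains $\ll R^{6\sigma}P^{3\sigma^2}(\log QP)^{B_0}$ bad conductors, not $\ell^{O(1)}$. The sum over $d>Y$ still converges, because $6\sigma<1$.
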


\begin{proposition}\label{prop:opt}
If a set of $m$ primes covers some $q\in[Q/2,Q]$, then $m\ge(1/\log2-o(1))\log Q$. In particular the leading constant
in Theorem~\ref{thm:main} cannot be improved.
\end{proposition}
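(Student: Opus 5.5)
The plan is to compare the number of subset products with the size of the unit group. If a set $\cP$ of $m$ primes covers $q$, then the map $F\mapsto\prod_{p\in F}p \bmod q$ from subsets of $\cP$ onto $(\Z/q\Z)^\times$ is surjective. Hence
\[
2^m\ge\varphi(q),\qquad\text{so}\qquad m\ge\log_2\varphi(q)=\frac{\log\varphi(q)}{\log2}.
\]
This is just the counting remark already made in the introduction.

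It then suffices to bound $\varphi(q)$ from below for $q\in[Q/2,Q]$. I will use the classical estimate
\[
\varphi(q)\gg\frac{q}{\log\log q},
\]
which follows from Mertens' theorem or from $\prod_{p\mid q}(1-1/p)\ge\prod_{p\le \omega}(1-1/p)$ with $\omega\ll\log q$. A cruder bound such as $\varphi(q)\ge\sqrt{q}/2$ would give only half the constant, so the sharper estimate is needed, though any bound of the form $\varphi(q)\ge q^{1-o(1)}$ would do.

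With this bound,
\[
\log\varphi(q)\ge\log(Q/2)-\log\log\log Q-O(1)=\log Q-o(\log Q).
\]
Dividing by $\log2$ gives $m\ge(1/\log2-o(1))\log Q$, uniformly in the choice of $q\in[Q/2,Q]$.

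For the ``in particular'': the set $\cP_Q$ of Theorem~\ref{thm:main} covers all but $O_A(Q(\log Q)^{-A})$ moduli up to $Q$. For large $Q$ this is fewer than the roughly $Q/2$ integers in $[Q/2,Q]$, so $\cP_Q$ covers some $q$ in that interval. Therefore $|\cP_Q|\ge(1/\log2-o(1))\log Q$, and the leading constant $1/\log2$ in Theorem~\ref{thm:main} cannot be lowered.

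There is no real obstacle here. The only step needing care is choosing a lower bound for $\varphi(q)$ whose loss is $o(\log Q)$ after taking logarithms, and the standard $q/\log\log q$ bound is more than enough.
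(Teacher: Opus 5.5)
Your proof is correct and is essentially the paper's argument: surjectivity of subset products gives $2^m\ge\varphi(q)$, a lower bound $\varphi(q)\ge q^{1-o(1)}$ finishes, and the $o(Q)$ exceptional set guarantees a covered $q\in[Q/2,Q]$. The only difference is that you quote the Mertens-type bound $\varphi(q)\gg q/\log\log q$, where the comparison should be with the product over the first $\omega(q)$ primes, all of which are $\ll\log q$, rather than over primes up to $\omega(q)$; the paper instead uses the more elementary $\varphi(q)\ge q\,2^{-\omega(q)}$ with $\omega(q)=O(\log q/\log\log q)$, and either bound suffices.
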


\subsection*{Method}
Fix a modulus $q$ and adjoin random primes $p_1,p_2,\dots$ one at a time; the set of subset products grows from $S$ to
$S\cup p_jS$. For a fixed finite abelian group, a classical doubling argument going back to Erd\H{o}s and R\'enyi
\cite{ER} shows that about $\log_2|G|$ random elements suffice when every nonprincipal character $\chi$ has small
$|\E\chi(g)|$. For $G=(\Z/q\Z)^\times$ and random primes, $\E\chi(p)$ is small unless the primitive character inducing
$\chi$ has an $L$-function zero close to $s=1$. Zero-density estimates show there are few such characters. Those of large
conductor are handled by discarding their multiples. Those of small conductor are \emph{repaired once}: for each we
insert about $\log\log Q$ primes from a residue class where the character has real part at most $-1/2$
(Lemma~\ref{lem:repair}), at a total cost of $o(\log Q/\log\log Q)$ primes. Lemma~\ref{lem:cover} is a version of the
doubling argument that tolerates such a repaired exceptional set. The only step that ranges over all moduli is linearity
of expectation, so no union bound over characters is needed. For the structured variant, an elementary weighted law
supported on the Boolean products of two primes annihilates each bad character exactly (Lemma~\ref{lem:exact-repair}).
This gives a conditional failure probability $o(1/Q)$ for each eligible modulus, allowing all further random failures
to be removed with the stated leading constant in Theorem~\ref{thm:structured}. We do not claim that this constant is
optimal.

\begin{remark}
The finite covering lemma (Lemma~\ref{lem:cover}) is a routine extension of the Erd\H{o}s--R\'enyi argument; the new
ingredients are the repair step and its analytic bookkeeping. We do not track effectivity. Linnik's theorem is used only in the
weak form \eqref{eq:L}, which is effective (cf.\ \cite[Remark~2.2]{KST}), but we have not checked the constants in the
other inputs and make no claim either way.
\end{remark}

\section{A covering lemma in finite abelian groups}\label{sec:lemma}

Let $G$ be a finite abelian group of order $H\ge2$, written multiplicatively, with dual group $\widehat G$. For a
probability measure $\mu$ on $G$ write $\widehat\mu(\chi)=\sum_{x}\mu(x)\chi(x)$. For $g\in G$ let $\beta_g$ be the
measure $\frac12(\delta_1+\delta_g)$ (``include $g$ or not'').

\begin{lemma}\label{lem:cover}
Let $\mu_0$ be a probability measure on $G$ with support $S_0$. Let $g_1,g_2,\dots$ be independent $G$-valued random
variables with a common law $\nu$, independent of $\mu_0$, and put $\mu_j=\mu_{j-1}*\beta_{g_j}$ and
$S_j=\operatorname{supp}\mu_j=S_{j-1}\cup g_jS_{j-1}$. Let $\mathcal B$ be a set of $B$ nonprincipal characters and let
$0\le\delta\le1$, $\epsilon\ge0$ satisfy
\[
\operatorname{Re}\E\chi(g_1)\le\delta\quad(\chi\ne1,\ \chi\notin\mathcal B),\qquad
|\widehat\mu_0(\chi)|\le\epsilon\quad(\chi\in\mathcal B).
\]
Then for all integers $r,s\ge0$ and every $0<\alpha<1$, with $\rho=(1+\delta)/2$ and
$\beta=\delta+(1-\delta)(B+1)\alpha$,
\begin{equation}\label{eq:cover}
\Pr(S_{r+s}\ne G)\ \le\ \frac{1-\alpha}{\alpha}\Bigl((H-1)\rho^r+B\epsilon^2\Bigr)+\alpha H\beta^s .
\end{equation}
\end{lemma}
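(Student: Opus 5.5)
The plan is a two-stage argument: an $L^2$ (Parseval) stage over the first $r$ steps showing that $S_r$ is large, followed by a shrinking argument over the last $s$ steps for the complement. The two terms of \eqref{eq:cover} correspond to these two stages.

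\emph{Stage 1 (first $r$ steps).} Since $\widehat{\beta_g}(\chi)=(1+\chi(g))/2$, we have $\widehat\mu_j(\chi)=\widehat\mu_{j-1}(\chi)\,(1+\chi(g_j))/2$. Using
\[
\bigl|(1+\chi(g))/2\bigr|^2=(1+\operatorname{Re}\chi(g))/2
\]
and independence of $g_j$ from $\mu_{j-1}$, we get
$\E\bigl[|\widehat\mu_j(\chi)|^2\mid\mu_{j-1}\bigr]\le\rho\,|\widehat\mu_{j-1}(\chi)|^2$ for every good nonprincipal $\chi$. For $\chi\in\mathcal B$ the factor has modulus at most $1$, so $|\widehat\mu_r(\chi)|\le\epsilon$ deterministically. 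Put $\Phi=\sum_{\chi\ne1}|\widehat\mu_r(\chi)|^2$. Then
\[
\E\Phi\le(H-1)\rho^r+B\epsilon^2 .
\]
Parseval gives $\sum_x\mu_r(x)^2=(1+\Phi)/H$. Cauchy--Schwarz gives $1\le|S_r|\sum_x\mu_r(x)^2$, hence $|S_r|\ge H/(1+\Phi)$. Therefore $|S_r|<(1-\alpha)H$ forces $\Phi>\alpha/(1-\alpha)$. By Markov, this event has probability at most $\frac{1-\alpha}{\alpha}\bigl((H-1)\rho^r+B\epsilon^2\bigr)$, which is the first term of \eqref{eq:cover}.

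\emph{Stage 2 (last $s$ steps).} Let $T_j=G\setminus S_{r+j}$. Since $S_{j}=S_{j-1}\cup g_jS_{j-1}$, we have $T_j=T_{j-1}\cap g_{r+j}T_{j-1}$. In particular $T_j$ is decreasing, so on the good event $|T_j|\le\alpha H$ for all $j$.

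For a fixed set $T$ with $|T|\le\alpha H$ and $g\sim\nu$, expand $1_T$ in characters. Then $\E|T\cap gT|$ equals $\frac1H\sum_\chi|\widehat{1_T}(\chi)|^2\,\E\overline{\chi(g)}$. This is real, so each $\E\overline{\chi(g)}$ may be replaced by $\operatorname{Re}\E\chi(g)$.

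Split the characters into the principal character, the set $\mathcal B$, and the rest.
\begin{itemize}
\item The total mass is $\sum_\chi|\widehat{1_T}|^2=H|T|$.
\item The principal character and $\mathcal B$ carry mass $M\le(B+1)|T|^2\le(B+1)\alpha H|T|$, using $|\widehat{1_T}(\chi)|\le|T|$.
\item The real parts are bounded by $1$ on the principal character and $\mathcal B$, and by $\delta$ elsewhere.
\end{itemize}
Hence
\[
\E|T\cap gT|\le\frac1H\bigl(\delta(H|T|-M)+M\bigr)=\delta|T|+(1-\delta)\frac MH\le\beta|T| .
\]
Conditioning step by step gives $\E\bigl[|T_s|\,1_{\{|T_0|\le\alpha H\}}\bigr]\le\beta^s\alpha H$. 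Markov's inequality for $|T_s|\ge1$ then yields the second term $\alpha H\beta^s$. A union bound over the two events finishes the proof.

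The main obstacle is the Stage 2 bound. The bad characters have uncontrolled $\E\chi(g)$, so they must be absorbed using only the smallness of $|T|$ relative to $H$. This is exactly why $\beta$ carries the factor $(B+1)\alpha$, and it is why Stage 1 must first reduce the complement to size at most $\alpha H$. Stage 1 only uses that the $\mathcal B$-coefficients never grow, which is where the hypothesis $|\widehat\mu_0(\chi)|\le\epsilon$ on $\mathcal B$ (the earlier ``repair'') enters.
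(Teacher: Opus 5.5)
Your proposal is correct and follows the paper's proof essentially step for step: a collision stage via Parseval, Cauchy--Schwarz on the support and Markov's inequality (your bound $|S_r|\ge H/(1+\Phi)$ is the paper's $H\|\mu_r\|_2^2-1\ge h/(H-h)$ in another form), then a hole-contraction stage via the Fourier expansion of $\E|T\cap gT|$, where the principal and exceptional characters are absorbed using $|\widehat{1_T}(\chi)|\le|T|$ and $|T|\le\alpha H$ to produce $\beta$, followed by iteration and a final Markov bound. Your justification for passing to real parts, namely that the weights $|\widehat{1_T}(\chi)|^2$ are real, is if anything slightly cleaner than the paper's.
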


\begin{proof}
\emph{Collision step.} $\widehat\mu_r(\chi)=\widehat\mu_0(\chi)\prod_{j\le r}\frac{1+\chi(g_j)}2$, and
$\bigl|\frac{1+\chi(g)}2\bigr|^2=\frac{1+\operatorname{Re}\chi(g)}2$. By independence,
$\E|\widehat\mu_r(\chi)|^2\le|\widehat\mu_0(\chi)|^2\rho^r\le\rho^r$ for $\chi\ne1$ with $\chi\notin\mathcal B$, while
$|\widehat\mu_r(\chi)|\le|\widehat\mu_0(\chi)|\le\epsilon$ for $\chi\in\mathcal B$. By orthogonality,
\[
\E\bigl[H\|\mu_r\|_2^2-1\bigr]=\E\sum_{\chi\ne1}|\widehat\mu_r(\chi)|^2\le(H-1)\rho^r+B\epsilon^2 .
\]
If $S_r$ misses $h$ elements, Cauchy--Schwarz on the support gives $\|\mu_r\|_2^2\ge1/(H-h)$, so
$H\|\mu_r\|_2^2-1\ge h/(H-h)$. Hence, by Markov's inequality, $\Pr(h>\alpha H)\le\frac{1-\alpha}\alpha\bigl((H-1)\rho^r
+B\epsilon^2\bigr)$.

\emph{Hole contraction.} Let $T=G\setminus S$ with $|T|=h$. After adjoining $g$ the missing set is $T\cap gT$. With
$F_T(\chi)=\sum_{x\in T}\chi(x)$, orthogonality gives
\[
\E|T\cap gT|=\sum_{x,y\in T}\nu(xy^{-1})
=\frac1H\sum_{\chi}|F_T(\chi)|^2\,\E\chi(g)
=\frac{h^2}H+\frac1H\sum_{\chi\ne1}|F_T(\chi)|^2\operatorname{Re}\E\chi(g),
\]
the last step because the left side is real and $|F_T(\overline\chi)|=|F_T(\chi)|$. Bounding
$\operatorname{Re}\E\chi(g)$ by $\delta$ for $\chi\ne1$ with $\chi\notin\mathcal B$ and by $1$ for $\chi\in\mathcal B$, and using
$\sum_{\chi\ne1}|F_T(\chi)|^2=Hh-h^2$ and $|F_T(\chi)|\le h$,
\begin{equation}\label{eq:hole}
\E|T\cap gT|\le\delta h+(1-\delta)(B+1)\frac{h^2}H .
\end{equation}
If $h\le\alpha H$ this is at most $\beta h$. Missing sets only shrink, and $g_{j+1}$ is independent of
$g_1,\dots,g_j$. So on the event $\{|G\setminus S_r|\le\alpha H\}$ the expected number of elements missing from $S_{r+s}$
is at most $\alpha H\beta^s$, and Markov's inequality bounds the probability that it is positive. Adding the two bounds
gives \eqref{eq:cover}.
\end{proof}

\begin{remark}[Sharpness of the exceptional term]
If $\nu$ is uniform on a subgroup of index $a$, then exactly $a-1$ nonprincipal characters have $\E\chi(g)=1$ and the
others have $\E\chi(g)=0$. A coset $T$ of the subgroup, $h=H/a$, never shrinks, and \eqref{eq:hole} with $\delta=0$,
$B=a-1$ is an equality. So the exceptional characters cannot simply be ignored in the hole step.
\end{remark}

\section{Repairing a character}

\begin{lemma}\label{lem:repair}
Every nonprincipal Dirichlet character $\psi$ takes a value with real part at most $-1/2$ at some unit $a$. For every prime
$p\equiv a\pmod{\operatorname{cond}\psi}$ one has $|(1+\psi(p))/2|\le1/2$.
\end{lemma}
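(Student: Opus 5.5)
The plan is to reduce to the primitive character $\psi^*$ of conductor $f=\operatorname{cond}\psi$ that induces $\psi$, and then use the structure of its image. That image is a finite cyclic subgroup of the unit circle: since $\psi^*$ is a homomorphism from $(\Z/f\Z)^\times$, its values on units form the group $\mu_k$ of $k$-th roots of unity. Because $\psi$ is nonprincipal, $\psi^*$ is nontrivial, so $k\ge2$.

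For the first claim, I would exhibit an element of $\mu_k$ with real part at most $-1/2$. Take $\zeta=e^{2\pi i j/k}$ with $j$ chosen so that $j/k$ is as close as possible to $1/2$. Then $|j/k-1/2|\le 1/(2k)\le 1/4$ when $k\ge2$. In fact, more precisely:
\begin{itemize}
\item for $k$ even, take $j=k/2$, so $\zeta=-1$;
\item for $k$ odd with $k\ge3$, take $j=(k-1)/2$, so that $|2\pi j/k-\pi|=\pi/k\le\pi/3$.
\end{itemize}
In both cases $\operatorname{Re}\zeta=\cos(2\pi j/k)\le\cos(2\pi/3)=-1/2$.

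Next I would choose a unit $a$ modulo $f$ with $\psi^*(a)=\zeta$, which exists by surjectivity onto $\mu_k$. To get a unit modulo the original modulus $q$ of $\psi$ taking the same value, I lift $a$ by the Chinese remainder theorem. Concretely, pick $a'\equiv a\pmod f$ with $a'$ coprime to $q$; this is possible since the units modulo $q$ surject onto the units modulo $f$ when $f\mid q$. Then $\psi(a')=\psi^*(a)$. This handles the ambiguity of which modulus ``unit'' refers to.

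For the second claim, let $p\equiv a\pmod f$ be prime. If $p\nmid q$, then $\psi(p)=\psi^*(p)=\psi^*(a)=\zeta$. With $x=\operatorname{Re}\zeta\le-1/2$ and $|\zeta|=1$, the identity already used in Lemma~\ref{lem:cover} gives
\[
\Bigl|\tfrac{1+\zeta}{2}\Bigr|^2=\tfrac{1+x}{2}\le\tfrac14,
\]
so $|(1+\psi(p))/2|\le 1/2$.

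If instead $p\mid q$ (impossible when $p\mid f$, since $p\equiv a$ is coprime to $f$), then $\psi(p)=0$ and the bound would fail. So I would either read the statement for $\psi^*$, i.e.\ as a Dirichlet character mod its conductor, or note that the primes eventually inserted are coprime to $q$.

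The only mildly delicate step is this bookkeeping between $\psi$ and $\psi^*$. The arithmetic of roots of unity is routine.
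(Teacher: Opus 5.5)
Your handling of the roots of unity and of the case $p\nmid q$ is the same as the paper's. The values of $\psi$ on units form $\mu_k$ with $k\ge2$, and some element equals $-1$ or lies within angle $\pi/k\le\pi/3$ of $-1$. Then $|(1+\zeta)/2|^2=(1+\operatorname{Re}\zeta)/2\le1/4$. Your passage between $\psi$ and $\psi^*$ via the surjection $(\Z/q\Z)^\times\to(\Z/f\Z)^\times$ is also correct, and more explicit than the paper's.

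The error is in your last paragraph. When $p\equiv a\pmod f$ but $p\mid q$, you have $\psi(p)=0$. Then $|(1+\psi(p))/2|=1/2$, which satisfies the required bound $\le1/2$ with equality, so the bound does not fail. The paper's proof handles exactly this case in one line. Because you took this case to be a counterexample, you proposed changing the statement, either by reading it for $\psi^*$ or by restricting to $p\nmid q$. As written, your proposal therefore proves only a weaker statement and leaves a case of the actual lemma unproved because of a miscalculation. Replace that paragraph with the one-line check above and the proof is complete; no reinterpretation is needed.
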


\begin{proof}
The values of $\psi$ at units form the group of $h$-th roots of unity for some $h\ge2$. If $h=2$ the value $-1$ occurs;
if $h\ge3$ some $h$-th root lies within angle $\pi/h\le\pi/3$ of $-1$ and has real part at most $-\cos(\pi/3)=-1/2$. For
such $p$, either $\psi(p)=0$, in which case $|(1+\psi(p))/2|=1/2$, or $\psi(p)=\psi(a)$ and
$|(1+\psi(p))/2|^2=(1+\operatorname{Re}\psi(p))/2\le1/4$.
\end{proof}

Consequently, if $\mu_0$ is the law of a random subset product of a fixed list containing $t$ distinct primes $p\equiv
a\pmod{\operatorname{cond}\psi}$, and $\chi$ is a character modulo $q$ induced by $\psi$ with $q$ coprime to the list,
then $|\widehat\mu_0(\chi)|=\prod|(1+\chi(p))/2|\le2^{-t}$: the other factors have modulus at most $1$. Since $\chi(p)=
\psi(p)$ for $p\nmid q$, one repair of $\psi$ serves every modulus into which $\psi$ induces, and each primitive character
induces at most one character modulo a given $q$.

For the structured variant we use the following elementary exact cancellation.

\begin{lemma}\label{lem:exact-repair}
Let $\psi$ be a primitive nonprincipal Dirichlet character of conductor $d$, and choose a unit $a$ with
$c:=\operatorname{Re}\psi(a)\le-1/2$. Let $u,v$ be distinct primes with $u\equiv a\pmod d$ and
$v\equiv a^{-1}\pmod d$. Define a random integer $W_\psi$ by
\[
\Pr(W_\psi=1)=\frac{-c}{1-c},\qquad
\Pr(W_\psi=u)=\Pr(W_\psi=v)=\frac{1}{2(1-c)}.
\]
These probabilities are positive and sum to one. If $d\mid q$, $q$ is coprime to $uv$, and $\chi\bmod q$ is induced by
$\psi$, then $\E\chi(W_\psi)=0$. Every value of $W_\psi$ is a Boolean subset product of $\{u,v\}$.
\end{lemma}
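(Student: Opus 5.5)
The proof is a direct computation, and I would organize it in four short checks.

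\emph{Existence.} The unit $a$ with $c=\operatorname{Re}\psi(a)\le-1/2$ exists by Lemma~\ref{lem:repair}. Since $\gcd(a,d)=1$, Dirichlet's theorem gives infinitely many primes in each of the classes $a$ and $a^{-1}\pmod d$. So we may choose $u\ne v$, and if we wish, choose both coprime to any prescribed $q$.

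\emph{Positivity and normalization.} Since $c\le-1/2<0$ and $|c|\le1$, we have $1-c\in[3/2,2]$ and $-c\in[1/2,1]$. Hence all three probabilities are positive, and
\[
\frac{-c}{1-c}+\frac1{1-c}=\frac{1-c}{1-c}=1 .
\]

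\emph{Cancellation.} Let $\chi\bmod q$ be induced by $\psi$, with $d\mid q$ and $\gcd(q,uv)=1$. Then $\chi(u)=\psi(u)=\psi(a)$. Also $\chi(v)=\psi(a^{-1})=\overline{\psi(a)}$, because $\psi(a)$ is a root of unity. Therefore
\[
\E\chi(W_\psi)=\frac{-c}{1-c}+\frac{\psi(a)+\overline{\psi(a)}}{2(1-c)}=\frac{-c+c}{1-c}=0 .
\]
The only point needing care is that $v$ is chosen in the class of $a^{-1}$ rather than $a$. This makes the two nontrivial values conjugate, so the imaginary parts cancel exactly and only $2c$ survives. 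It is also why two distinct primes are needed.

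\emph{Boolean subset products.} The values $1$, $u$ and $v$ are the products over the subsets $\emptyset$, $\{u\}$ and $\{v\}$ of $\{u,v\}$. There is no real obstacle in this lemma; the only care needed is the conjugation choice above and checking the sign conditions on $c$.
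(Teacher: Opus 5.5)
Your proof is correct and is essentially the paper's argument. It uses $-1\le c\le-1/2$ for positivity and normalization; then $\chi(u)=\psi(a)$ and $\chi(v)=\overline{\psi(a)}$, so the two singleton terms contribute $c/(1-c)$, which cancels the mass $-c/(1-c)$ at $1$, and the support consists of the empty set and the two singletons. Two small remarks: the Dirichlet-theorem existence check is not needed, since the lemma takes $u,v$ as given; and your comment that the conjugate choice is ``why two distinct primes are needed'' is only heuristic, because when $a^2\equiv1\pmod d$ the two classes coincide and distinctness is simply a hypothesis, as the paper notes.
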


\begin{proof}
Put $z=\psi(a)$. Then $|z|=1$, $\psi(a^{-1})=\overline z$ and $-1\le c\le-1/2$. Positivity is immediate, and the sum
of the displayed weights is $(-c+1)/(1-c)=1$. Since $q$ is coprime to $uv$, induction gives $\chi(u)=z$ and
$\chi(v)=\overline z$, so
\[
\E\chi(W_\psi)=\frac{-c}{1-c}+\frac{z+\overline z}{2(1-c)}=0.
\]
The three choices are the empty subset and the two singleton subsets. The doubleton has probability zero; only support
containment in the Boolean product set is needed. If the two residue classes coincide, two distinct primes in that class
still suffice. The weights are exact real numbers, not numerical approximations.
\end{proof}

\section{Analytic input}\label{sec:input}

We use three unconditional facts.

\emph{(L) Linnik's theorem, in a weak quantitative form.} There are absolute constants $L_0\ge1$ and $c>0$ such that
\begin{equation}\label{eq:L}
\pi(x;d,a)\ge c\,\frac{x}{d^{2}\log x}\qquad\text{whenever }d\ge2,\ (a,d)=1,\ x\ge d^{L_0}.
\end{equation}
This follows from \cite[Corollary~18.8]{IK}: for $d$ sufficiently large and $x\ge d^{L}$ one has $\Psi(x;d,a)\gg
x/(\varphi(d)\sqrt d)$ with an absolute implied constant. Removing prime powers costs $O(\sqrt x\log^2x)$, which is
negligible once $x\ge d^{L_0}$ with $L_0\ge\max(L,5)$, and the finitely many small $d$ are covered by the prime number
theorem in progressions after enlarging $L_0$ and shrinking $c$. (See also \cite[Lemma~2.1]{KST}, which states the
stronger bound with $d^{1+\eta}$ for every $\eta>0$, with ineffective constants.)

\emph{(ZD) Zero density for the family of primitive characters} \cite[Theorem~1, (1.8)]{Ju}: for $4/5\le\alpha\le1$,
$T\ge1$ and every $\varepsilon>0$,
\[
\sum_{d\le D}\ \sideset{}{^*}\sum_{\psi\bmod d}N(\alpha,T,\psi)\ll_\varepsilon(D^2T)^{(2+\varepsilon)(1-\alpha)},
\]
where $\sum^*$ runs over primitive characters and $N(\alpha,T,\psi)$ counts zeros of $L(s,\psi)$ in
$\alpha\le\operatorname{Re}s\le1$, $|\operatorname{Im}s|\le T$. Taking $\varepsilon=1$ and $\alpha=1-\sigma$:
\begin{equation}\label{eq:ZD}
\sum_{d\le D}\ \sideset{}{^*}\sum_{\psi\bmod d}N(1-\sigma,T,\psi)\ll(D^2T)^{3\sigma}\qquad(0\le\sigma\le1/5,\ T\ge1,\ D\ge1),
\end{equation}
with an absolute implied constant. (Klurman, Shparlinski and Teräväinen use the same shape with a factor $(\log DT)^{B_0}$,
citing the grand density theorem \cite[Theorem~10.4]{IK}; there the power of the logarithm and the implied constant depend
on $\alpha$. Our parameters define $\sigma$ in terms of $B_0$ (Section~\ref{sec:proof}), so a log power depending on
$\alpha=1-\sigma$ would make that choice circular; \eqref{eq:ZD} has an absolute constant and no logarithm. Any bound of the
form \eqref{eq:ZD} with an extra factor $(\log DT)^{B_0}$, $B_0$ absolute, would do; below we keep a general $B_0\ge0$,
and \eqref{eq:ZD} gives $B_0=0$.)

\emph{(ZF) Prime sums from a zero-free rectangle} \cite[Lemma~2.4]{KST}: for $0<\sigma\le1/7$, $P\ge3$ and a primitive
nonprincipal $\psi$ of conductor at most $Q$ such that $L(s,\psi)$ has no zero in $\operatorname{Re}s\ge1-\sigma$,
$|\operatorname{Im}s|\le P^\sigma$,
\begin{equation}\label{eq:ZF}
\Bigl|\sum_{p\le P}\psi(p)\Bigr|\ll P^{1-\sigma}(\log QP)^3 .
\end{equation}
For completeness we recall the argument of \cite{KST}. Write $\Psi(x;\psi)=\sum_{n\le x}\psi(n)\Lambda(n)$ and
$\Psi(x;d,a)=\sum_{n\le x,\,n\equiv a\,(d)}\Lambda(n)$. With $T=P^\sigma/2$, Perron's formula gives, for $2\le x\le P$,
$\Psi(x;\psi)=-\frac1{2\pi i}\int_{1+1/\log x-iT}^{1+1/\log x+iT}\frac{L'}{L}(s,\psi)\frac{x^s}s\,ds
+O\bigl(x(\log QP)^2/T+\log QP\bigr)$. Put $\eta=1/\log(QP)$ and shift to $a=1-\sigma+\eta$; there are no poles
in between. Since a primitive nonprincipal character has conductor at least $3$, we have $Q\ge3$ and
$\eta<1/2\le T$. Every zero is at distance at least $\eta$ from this contour: either its real part is below $1-\sigma$,
or its imaginary part has absolute value greater than $2T$. Also $a>6/7>5/6$. The local partial-fraction formula
\cite[Lemma~11.1]{MV}, together with the standard bound $O(\log(QP))$ for the number of nontrivial zeros in a fixed-height window,
therefore gives $|L'/L(s,\psi)|\ll(\log QP)^2$ on the contour where $\operatorname{Re}s\le2$.
Where $\operatorname{Re}s>2$ the same bound follows from absolute convergence of the logarithmic-derivative series.
The constants here are absolute and uniform in $0<\sigma\le1/7$.
The vertical integral is bounded using $x^a\le eP^{1-\sigma}$ and
$\int_{-T}^T|a+it|^{-1}\,dt\ll\log(3T)$; each horizontal integral is bounded using
$\int_a^{1+1/\log x}x^u/T\,du\le ex/(T\log x)$. Thus
$|\Psi(x;\psi)|\ll P^{1-\sigma}(\log QP)^3$, uniformly for $2\le x\le P$.
Removing prime powers costs $O(P^{1/2}(\log P)^2)$, which is absorbed because $\sigma\le1/7$.
Finally, if $\vartheta(u;\psi)=\sum_{p\le u}\psi(p)\log p$, partial summation gives
\[
\sum_{p\le P}\psi(p)=\frac{\vartheta(P;\psi)}{\log P}
+\int_2^P\frac{\vartheta(u;\psi)}{u(\log u)^2}\,du.
\]
The integral weight has bounded total mass, proving \eqref{eq:ZF}.

Finally we use the prime number theorem $\pi(x)\sim x/\log x$.

\section{Proof of Theorem~\ref{thm:main}}\label{sec:proof}

\subsection*{Parameters}
Fix $A>0$ and put
\[
K=A+B_0+10,\qquad W=B_0+20,\qquad \sigma=W/C,
\]
with $B_0\ge0$ the absolute constant of \eqref{eq:ZD} (so $B_0=0$ is allowed),
\[
C=\max\{100W(K+W+1),\,2KL_0,\,10(A+2K+W+1)\},
\]
\[
P=\ell^C,\qquad Y=\ell^K,\qquad Z=\ell^{A+3}.
\]
Then $C=O(A+1)$, $C\sigma=W\ge20$, $0<\sigma\le1/(100(K+W+1))<1/7$, and
\begin{equation}\label{eq:params}
\begin{gathered}
\theta:=6K\sigma+3C\sigma^2=\frac{(6K+3W)W}{C}<\frac3{50},\\
C\ge KL_0,\qquad C-2K>A+3,\qquad -K+B_0+\theta<-A-9 .
\end{gathered}
\end{equation}
Call a primitive nonprincipal character \emph{bad} if $L(s,\psi)$ has a zero in $\operatorname{Re}s\ge1-\sigma$,
$|\operatorname{Im}s|\le P^\sigma$, and \emph{good} otherwise.

\subsection*{Few bad characters}
By \eqref{eq:ZD} with $D=Y$, $T=P^\sigma$, the number $B$ of bad characters of conductor at most $Y$ satisfies
\begin{equation}\label{eq:B}
B\ll(Y^2P^\sigma)^{3\sigma}(\log YP^\sigma)^{B_0}=\ell^{\theta}(\log\ell)^{O_A(1)}\le\ell^{1/4}
\end{equation}
for large $Q$. Let
\begin{equation}\label{eq:Ean}
\cE_{\rm an}=\{q\le Q:\ q \text{ is divisible by the conductor } d>Y \text{ of a bad character}\}.
\end{equation}
By \eqref{eq:ZD} with $D=2R$, a dyadic range $(R,2R]$ contains $\ll R^{6\sigma}P^{3\sigma^2}(\log QP)^{B_0}$ bad
conductors, each dividing at most $Q/R$ moduli $q\le Q$. Since $6\sigma<1$ the dyadic sum is dominated by $R\approx Y$,
and since $\log(QP)\le2\ell$,
\[
|\cE_{\rm an}|\ll QY^{-1+6\sigma}P^{3\sigma^2}(\log QP)^{B_0}\ll_A Q\ell^{-K+\theta+B_0}\le Q\ell^{-A-9}
\]
by \eqref{eq:params}.

\subsection*{The repair block}
For each of the $B$ bad characters $\psi$ of conductor $d\le Y$ choose a unit $a_\psi$ as in Lemma~\ref{lem:repair} and
$t=\lceil(A+20)\log_2\ell\rceil$ distinct primes $p\equiv a_\psi\pmod d$ in $(Z,P]$, not used for earlier characters.
This is possible: by \eqref{eq:L} and $P\ge Y^{L_0}\ge d^{L_0}$,
$\pi(P;d,a_\psi)\gg P/(Y^2\log P)\gg_A\ell^{C-2K}/\log\ell$, which exceeds $Z+Bt$ for large $Q$ by \eqref{eq:params}. Let
$\cR$ be the set of all these primes; $m_0:=|\cR|\le Bt=O_A(\ell^{1/4}\log\ell)$. Let $\mu_0$ be the law of a random
subset product of $\cR$. By Lemma~\ref{lem:repair}, for every $q$ coprime to $\prod\cR$ and every character $\chi\bmod q$
induced by a bad character of conductor at most $Y$,
\begin{equation}\label{eq:eps}
|\widehat\mu_0(\chi)|\le2^{-t}\le\epsilon:=\ell^{-A-20}.
\end{equation}

\subsection*{The random primes}
Let $\cI$ be the set of primes in $(Z,P]$ not in $\cR$, and let $p_1,p_2,\dots$ be independent and uniform on $\cI$.
Fix $q\notin\cE_{\rm an}$ coprime to $\prod\cR$. For the finite number $n=r+s$ of draws specified below, put
$U_q=\{p_j\nmid q\text{ for }1\le j\le n\}$. Conditioning on $U_q$ keeps $p_1,\dots,p_n$
independent, now uniform on $\cI_q=\{p\in\cI:p\nmid q\}$. Every nonprincipal $\chi\bmod q$ is induced by a primitive
$\psi$ with $\operatorname{cond}\psi\mid q$; since $q\notin\cE_{\rm an}$, either $\psi$ is good, or $\psi$ is bad of
conductor at most $Y$ and is repaired. For good $\psi$, \eqref{eq:ZF} and the prime number theorem give
$|\sum_{p\le P}\psi(p)|/\pi(P)\ll_A\ell^{-W+3}\log\ell$; removing the primes up to $Z$, the $m_0$ repair primes and the at
most $2\ell$ prime factors of $q$ changes the average by $O_A((Z+m_0+\ell)\log P/P)=O_A(\ell^{A+3-C}\log\ell)$. As
$W\ge20$ and $C-A-3>8$ by the choice of $C$, for large $Q$ and uniformly in $q$,
\begin{equation}\label{eq:delta}
|\E_{p\in\cI_q}\chi(p)|\le\delta:=\ell^{-8}\qquad(\chi\ \text{induced by a good character}).
\end{equation}
There are at most $B$ characters modulo $q$ induced by bad characters of conductor at most $Y$.

\subsection*{Covering one modulus}
Apply Lemma~\ref{lem:cover} to $G=(\Z/q\Z)^\times$ (if $\varphi(q)=1$ there is nothing to prove), $H=\varphi(q)\le Q$,
$\mu_0$ as above and $\nu$ the law of $p\bmod q$ for $p$ uniform on $\cI_q$, with
\[
\alpha=\ell^{-A-6},\qquad r=\Bigl\lceil\frac{\ell+(2A+20)\log\ell}{\lam}\Bigr\rceil,\qquad
s=\Bigl\lceil\frac{\ell+(A+10)\log\ell}{5\log\ell-\log3}\Bigr\rceil .
\]
Since $r\delta=o(1)$, $(H-1)\rho^r\le e^{\ell}(1+\delta)^r2^{-r}\le2\ell^{-2A-20}$, and $B\epsilon^2\le\ell^{-2A-39}$ by
\eqref{eq:B} and \eqref{eq:eps}; so the first term of \eqref{eq:cover} is at most $3\ell^{-A-14}$. Also
$\beta\le\ell^{-8}+(\ell^{1/4}+1)\ell^{-A-6}\le3\ell^{-5}$, and the choice of $s$ gives
$\alpha H\beta^s\le\ell^{-A-6}e^{\ell}(3\ell^{-5})^s\le\ell^{-2A-16}$. Let
\[
V_q=\Bigl\{\Bigl(\prod_{p\in F}p\Bigr)\prod_{j=1}^{n}p_j^{e_j}\bmod q:
F\subseteq\cR,\ e_1,\dots,e_n\in\{0,1\}\Bigr\}.
\]
This is the subset-product support of the \emph{indexed list}: a repeated prime may be selected once per occurrence.
On $U_q$, the lemma therefore gives
\begin{equation}\label{eq:one}
\Pr\bigl(V_q\ne(\Z/q\Z)^\times\mid U_q\bigr)\le4\ell^{-A-14}.
\end{equation}

\subsection*{One set for all moduli}
Let $X$ be the number of $2\le q\le Q$ with $q\notin\cE_{\rm an}$, $q$ coprime to $\prod\cR$, and $U_q$ holding, for which
$V_q\ne(\Z/q\Z)^\times$. By \eqref{eq:one} and linearity of expectation (no independence between moduli is used),
$\E X\le\sum_q\Pr(U_q)\Pr(\text{not covered}\mid U_q)\le4Q\ell^{-A-14}$, so $\Pr(X>Q\ell^{-A-10})\le4\ell^{-4}$. The
probability that $p_1,\dots,p_{r+s}$ are not distinct is at most $\binom{r+s}2/|\cI|=O_A(\ell^2\log P/P)=o(1)$. Fix an
outcome with distinct primes and $X\le Q\ell^{-A-10}$, and let $\cP_Q=\cR\cup\{p_1,\dots,p_{r+s}\}$. The random primes
lie outside $\cR$, so $V_q$ is now exactly the subset-product set of $\cP_Q$ for every $q$. Every prime of
$\cP_Q$ exceeds $Z$, so at most $Q\sum_{p\in\cP_Q}1/p\le Q|\cP_Q|/Z=O_A(Q\ell^{-A-2})$ moduli $q\le Q$ share a factor
with $\prod\cP_Q$; for all other $q$ the event $U_q$ holds and $q$ is coprime to $\prod\cR$. Therefore $\cP_Q$ covers every
$q\le Q$ outside a set of size
\[
|\cE_{\rm an}|+O_A(Q\ell^{-A-2})+Q\ell^{-A-10}=O_A(Q\ell^{-A}),
\]
and $|\cP_Q|=m_0+r+s=\ell/\lam+O_A(\ell/\log\ell)$. This proves Theorem~\ref{thm:main}. \qed

\section{Proof of Theorem~\ref{thm:structured}}

Keep the analytic parameters $P,Y,Z,\sigma$ and the set $\cE_{\rm an}$, but choose a separate repair block. For each of
the $B$ bad primitive characters of conductor $d\le Y$, choose $a_\psi$ as in Lemma~\ref{lem:repair} and two primes
$u_\psi\equiv a_\psi\pmod d$, $v_\psi\equiv a_\psi^{-1}\pmod d$ in $(Z,P]$, with all $2B$ primes globally distinct.
The supply from \eqref{eq:L} is $\gg_A\ell^{C-2K}/\log\ell$ in each class, which exceeds $Z+2B$ by
\eqref{eq:params}. This also permits two distinct primes when the two classes coincide. Write $\cR_*$ for their set.

Let the variables $W_\psi$ of Lemma~\ref{lem:exact-repair} be independent and put $W_{\rm rep}=\prod_\psi W_\psi$.
We use the law of $W_{\rm rep}$; no realized repair product is fixed. Its reduction modulo every $q$ coprime to
$\prod\cR_*$ defines a law $\mu_{0,q}^*$ supported on Boolean subset products of $\cR_*$.
The same primes and weights are used for every $q$. For a character induced by a bad primitive character of conductor
at most $Y$, its own block has Fourier coefficient zero, so independence gives
\[
\widehat\mu_{0,q}^*(\chi)=0.
\]
If $B=0$, take the empty repair set, $W_{\rm rep}=1$, and the point mass at $1$.

Let $\cI_*$ be the primes in $(Z,P]$ outside $\cR_*$, and put $\cI_{*,q}=\{p\in\cI_*:p\nmid q\}$. The proof of
\eqref{eq:delta}, now deleting $2B$ repair primes rather than $Bt$, gives the same bound $\delta=\ell^{-8}$ for every
good induced character on $\cI_{*,q}$. The deletion error is $O_A((Z+2B+\ell)\log P/P)$, and
$|\cI_{*,q}|\asymp P/\log P$. Put
\[
\alpha=\ell^{-A-6},\qquad
r_* =\Bigl\lceil\frac{2\ell+(2A+20)\log\ell}{\lam}\Bigr\rceil,\qquad
s_* =\Bigl\lceil\frac{2\ell+(A+10)\log\ell}{5\log\ell-\log3}\Bigr\rceil.
\]
The denominator is positive eventually. Sample $n_*=r_*+s_*$ independent primes uniformly from $\cI_*$, independently
of the auxiliary repair-block choices.

Fix $q\notin\cE_{\rm an}$ coprime to $\prod\cR_*$. Let $U_q^*$ be the finite event that none of these $n_*$ sampled
occurrences divides $q$. For a fixed indexed list $\mathbf p=(p_1,\dots,p_{n_*})$, let $S_q(\mathbf p)$ be the
support modulo $q$ of $W_{\rm rep}\prod_{j=1}^{n_*}p_j^{e_j}$, marginalizing over the repair variables and independent
fair bits $e_j\in\{0,1\}$. On $U_q^*$ this is the support of the unit-group convolution
\[
S_q(\mathbf p)=\operatorname{supp}\bigl(\mu_{0,q}^* *\beta_{p_1}*\cdots*\beta_{p_{n_*}}\bigr).
\]
We henceforth write $S_q$ for $S_q(\mathbf p)$; the probability below is over the sampled prime list alone.
Conditional on $U_q^*$ the prime samples remain independent and uniform on $\cI_{*,q}$, while the initial law remains
$\mu_{0,q}^*$ because the repair variables are independent of that list. For $H=\varphi(q)\ge2$,
Lemma~\ref{lem:cover} applies with $\epsilon=0$: every nonprincipal character is induced either by a good primitive
character or by one of at most $B$ bad primitive characters of conductor at most $Y$. The case $H=1$ is immediate.

With $\rho=(1+\delta)/2$, $r_*\delta=o(1)$ gives
\[
(H-1)\rho^{r_*}\le Q\exp(r_*\delta)2^{-r_*}\le2Q^{-1}\ell^{-2A-20}.
\]
Hence the collision term in \eqref{eq:cover} is at most $2Q^{-1}\ell^{-A-14}$. Also
$\beta=\delta+(1-\delta)(B+1)\alpha\le3\ell^{-5}$ and
\[
\alpha H\beta^{s_*}\le\ell^{-A-6}Q(3\ell^{-5})^{s_*}
\le Q^{-1}\ell^{-2A-16}.
\]
Consequently, for sufficiently large $Q$,
\[
\Pr\bigl(S_q\ne(\Z/q\Z)^\times\mid U_q^*\bigr)\le3Q^{-1}\ell^{-A-14}.
\]

Let $X_*$ count the $2\le q\le Q$ outside $\cE_{\rm an}$, coprime to $\prod\cR_*$, for which $U_q^*$ holds but $S_q$
is not the full unit group. Linearity of expectation, with no independence between moduli, gives
\[
\E X_*\le\sum_q\Pr(U_q^*)\Pr\bigl(S_q\ne(\Z/q\Z)^\times\mid U_q^*\bigr)
\le3\ell^{-A-14}=o(1).
\]
Thus $\Pr(X_*\ge1)=o(1)$. Separately, the probability of any repeated sampled prime is at most
$\binom{n_*}2/|\cI_*|=O_A(\ell^2\log P/P)=o(1)$. A union bound permits one outcome with $X_*=0$ and all samples
distinct; no character estimate is conditioned on distinctness. Fix it and put
$\cP'_Q=\cR_*\cup\{p_1,\dots,p_{n_*}\}$. Every final-coprime modulus outside $\cE_{\rm an}$ satisfies $U_q^*$ and is
covered by $S_q$. Each element of $S_q$ is a Boolean subset product of the distinct alphabet $\cP'_Q$, and every allowed
subset product is a unit by coprimality. Hence $\cP'_Q$ covers that modulus. Finally,
\[
|\cP'_Q|=2B+r_*+s_*=\frac{2\ell}{\lam}+O_A\Bigl(\frac{\ell}{\log\ell}\Bigr).
\]
This proves Theorem~\ref{thm:structured}. \qed

\section{Optimality}

\begin{proof}[Proof of Proposition~\ref{prop:opt}]
Covering $q$ needs $2^m\ge\varphi(q)$. Writing $u=\omega(q)$, the product of the distinct primes dividing $q$ is at least
$(u+1)!$, so $u=O(\log q/\log\log q)$ and $\varphi(q)\ge q\,2^{-u}=q^{1-o(1)}$. For $q\ge Q/2$ this gives
$m\ge\log_2\varphi(q)\ge(1/\lam-o(1))\log Q$. Since the exceptional set in Theorem~\ref{thm:main} is $o(Q)$, the covered
moduli include some $q\in[Q/2,Q]$.
\end{proof}

\section{A numerical illustration}

The following example is only an illustration of the repair mechanism; it plays no role in the proofs. Take the twelve
primes
\[
80191,\ 68881,\ 36451,\ 51421,\ 87481,\ 71551,\ 3691,\ 8101,\ 98911,\ 46261,\ 80911,\ 73681,
\]
all $\equiv1\pmod{15}$ (a random sample from the primes in $(10^3,10^5]$ in that class). Every subset product is congruent
to $1$ both modulo $3$ and modulo $5$. Thus these primes fail to cover every $2\le q\le1000$ divisible by $3$ or $5$;
in fact $531$ moduli in this range are not covered. Adding the three ``repair'' primes $1097,1009,1019$ (residues $2,4,14\bmod 15$, whose eight subset
products are exactly the eight units modulo $15$) gives a set of $15$ primes that covers every $2\le q\le1000$. With the three repair primes and only
the first eleven of the twelve primes, the only uncovered modulus is $q=747$. These facts were checked by exhaustive
computation in two independent implementations, both included in the verification supplement.

\bigskip
\noindent Independent researcher\\
\href{mailto:v@deltagray.com}{\texttt{v@deltagray.com}}

\end{document}